\newtheorem{thm}{Theorem}[section]
\newtheorem{lem}[thm]{Lemma}
\newtheorem{defn}[thm]{Definition}
\theoremstyle{remark}
\newtheorem{remark}[thm]{Remark}
\newcommand{\bC}{\mathbb{C}}
\newcommand{\bP}{\mathbb{P}}
\newcommand{\bQ}{\mathbb{Q}}
\newcommand{\cM}{\mathcal{M}}
\newcommand{\cO}{\mathcal{O}}
\newcommand{\mbar}{\overline{\cM}}
\newcommand{\on}{\operatorname}
\newcommand{\vir}{\textrm{vir}}
\DeclareMathOperator{\id}{Id}
\DeclareMathOperator{\spec}{Spec}
\begin{document}

\title{A remark on virtual pushforward properties in Gromov-Witten theory}

\begin{abstract}
We approach Gathmann's virtual pushforward property from the perspective of bivariant intersection theory,
extend a virtual pushforward result of Manolache, and use our extension to deduce
a result of Gathmann 
relating relative and rubber GW invariants of a $\mathbb{P}^1$ bundle with invariants of its base.
\end{abstract}

\author{Feng~Qu}
\address{
Beijing International Center for Mathematical Research, 
Beijing, 100871, China.}
\email{fengquest@gmail.com}

\maketitle

\section{Introduction}
The purpose of this note is to relate
the virtual pushforward property defined by Gathmmann to degree zero operational Chow rings, then reformulate and extend results of Manolache
in \cite{vpf}.

A virtual pushforward property is defined in 
\cite[Definition 5.2.1]{Ga} as follows.
Let $f\colon F \to G$ be a proper map between moduli stacks of stable maps over $\bC$, 
$[F]^\vir$ and $[G]^\vir$
their virtual classes in Chow groups respectively. Assume $d=\deg[F]^\vir -\deg[G]^\vir$ is nonnegtive. $f$ is said
to have the virtual pushforward property
if
$
f_*(\gamma \cap [F]^\vir) 
$ is zero when $\gamma \in A^{< d}(F)$, and a scalar multiple of $[G]^\vir$ in $A_*(G)$ if $\gamma \in A^d(F)$.
Here $A_*, A^*$ denote Chow groups and Chow rings respectively, and $\gamma$ is made up of evaluation classes and cotangent line classes.
A key insight in  \cite{Lai,vpf} is that virtual pushforward property for $f$ becomes tractable if $f$ is virtually smooth, i.e., there exists a virtual
pullback (\cite{vpb}) $f^!$ such that $f^!([G]^\vir)=[F]^\vir$.

We note that it is more flexible to allow $f^!$ to be a bivariant class 
in $A^{-d}(F\xrightarrow{f}G)$.
If the bivariant class $f_*(\gamma\cdot f^!)$ in $A^{\le 0}(G)$ is a scalar multiplication, then
$f$ has the virtual pushforward property.
This formulation seems to simplify proofs and makes the underlying intersection theory arguments more transparent. 

Results in \cite{vpf} about a strong virtual pushforward property  can be reformulated using bivariant classes as follows.
It is straightforward to show $A^{<0}(G)=0$, and a bivariant class in $A^0(G)$ is determined by its action on $B_0(G)$, the group of zero cycles in $G$ modulo algebraic equivalences with rational coefficients.
A class is a scalar multiplication if it action on $B_0(G)$ is.
 In particular, if $B_0(G)=\bQ$, any $\bQ$-linear endomorphism of $B_0(G)$ is a scalar multiplication, and this implies $A^0(G)$ consists only of scalar multiplications, forcing $f$ to satisfy the virtual pushforward property
 (cf. \cite[Theorem 3.13]{vpf}).
More generally, a class in $A^0(G)$ is a scalar multiplication if it is a pullback via some map $h\colon G \to G'$ such that $B_0(G)=\bQ$ (cf. \cite[Corollary 3.15]{vpf}).

Any scalar multiplication in $A^0(G)$ is the pullback of a scalar multiplication via $G \to \spec \bC$, so we already have a complete characterization of scalar multiplications in $A^0(G)$. However, the requirement of being a pullback might be too restrictive to apply.
In this note, we show that
\begin{thm} \label{FFF}
Let $h \colon G \to G'$ be a map between proper DM stacks over $\bC$,  and $c \in A^0(G)$ and $c' \in A^0(G')$ two bivariant classes.
If the diagram
\[
\xymatrix{
A_0(G) \ar[d]^{h_*} & A_0(G)\ar[l]_c\ar[d]^{h_*}\\
A_0(G')          &   A_0(G')\ar[l]_{c'},
}
\] is commutative,
 then
$c$ is a scalar multiplication if $c'$ is.
In particular, if $B_0(G')=\bQ$, then $c$ is a scalar multiplication.
\end{thm}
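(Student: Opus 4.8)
The plan is to reduce everything to the induced action on $B_0$ and then to exploit the fact that, over $\bC$, the only information carried by $B_0$ of a \emph{connected} proper stack is the degree. By the reformulation recorded above, $c$ is a scalar multiplication exactly when its action on $B_0(G)$ is, and similarly for $c'$; moreover $c$, $c'$ and $h_*$ all descend to $B_0$ (proper pushforward respects algebraic equivalence, and a bivariant class is determined by its action on $B_0$). Since $A_0 \to B_0$ is surjective, commutativity of the square in the statement passes to the induced maps $c\colon B_0(G)\to B_0(G)$, $c'\colon B_0(G')\to B_0(G')$ and $h_*\colon B_0(G)\to B_0(G')$, so I work with these throughout.

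First I would decompose $G=\coprod_i G_i$ into connected components. Each inclusion $G_i\hookrightarrow G$ is both open and closed, hence proper; since a bivariant class commutes with proper pushforward, $c$ carries the image of $B_0(G_i)$ into itself and restricts to a bivariant class $c_i\in A^0(G_i)$. The key input is that a connected proper DM stack over $\bC$ has $B_0=\bQ$: passing to the coarse moduli space reduces this to the classical fact that on a connected proper variety any two closed points are algebraically equivalent (join them by a chain of irreducible curves and move points within the normalization of each curve). Consequently $B_0(G_i)=\bQ$, generated by the class $[p_i]$ of any closed point, and any $\bQ$-linear endomorphism of it is a scalar; in particular $c_i$ is automatically multiplication by some $\lambda_i$.

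Next I would pin down the scalars using the diagram. Write $\lambda$ for the scalar by which $c'$ acts, using that $c'$ is a scalar multiplication. Fix a closed point $p_i\in G_i$; its image $h_*[p_i]$ is a positive rational multiple of the class of the point $h(p_i)$, hence \emph{nonzero} in $B_0(G')$ because it has positive degree. Commutativity then gives
\[
\lambda_i\, h_*[p_i] \;=\; h_*\bigl(c[p_i]\bigr) \;=\; c'\bigl(h_*[p_i]\bigr) \;=\; \lambda\, h_*[p_i],
\]
and cancelling the nonzero class $h_*[p_i]$ forces $\lambda_i=\lambda$ for every $i$. Thus $c$ acts as multiplication by the single scalar $\lambda$ on all of $B_0(G)=\bigoplus_i B_0(G_i)$, so $c$ is a scalar multiplication. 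The last sentence of the theorem is then immediate: if $B_0(G')=\bQ$ then any $\bQ$-linear endomorphism of $B_0(G')$, in particular $c'$, is a scalar, so the hypothesis that $c'$ be a scalar multiplication is automatic.

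I expect the only real subtlety to be the reduction $B_0(\text{connected})=\bQ$, together with the observation—elementary but essential—that the whole content of the theorem lies in \emph{matching the component-wise scalars}: on a single connected component $c$ is a scalar for free (its target $B_0(G_i)=\bQ$ is one-dimensional), and it is precisely the factorization through the commuting $c'$ that forces these per-component scalars to coincide across the possibly disconnected $G$.
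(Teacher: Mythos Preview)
Your argument is correct, and its core is the same as the paper's: for a closed point $p$ of $G$, the pushforward $h_*[p]$ is nonzero in $B_0(G')$ (it has positive degree since $G'$ is proper), so the commuting square forces the scalar by which $c$ acts at $p$ to equal the scalar for $c'$. The paper packages this slightly differently. Rather than decomposing $G$ into connected components and invoking the geometric fact $B_0(\text{connected proper})=\bQ$, the paper works point by point: for each closed point $l\colon P\to G$ it stacks the square
\[
\xymatrix{
A_0(P)\ar[d]^{l_*} & A_0(P)\ar[l]_c\ar[d]^{l_*}\\
A_0(G) & A_0(G)\ar[l]_c
}
\]
on top of the given one, and uses injectivity of $(h\circ l)_*\colon A_0(P)\to A_0(G')$ (their Lemma on injectivity for points of a proper stack) to conclude that $c$ and $c'$ agree on $A_0(P)$. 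That $c$ is then a scalar follows from the lemma you cite as ``the reformulation recorded above'' (their Lemma~\ref{GGG}), whose proof already shows $c$ is determined by its action on closed points via a Gysin pullback $j^!$ from $P\hookrightarrow W$. So the paper never needs connectedness or the statement $B_0(G_i)=\bQ$; your route trades that Gysin step for the (equally standard, but extra) input that any two closed points on a connected proper stack over $\bC$ are algebraically equivalent. Both arrive at the same place with comparable effort; your organization makes the linear-algebra picture ($B_0(G)=\bigoplus_i\bQ$, match the diagonal entries) more visible, while the paper's stays entirely inside the bivariant formalism.
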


Theorem \ref{FFF} has a rather trivial proof. To justify its utility,
we apply it to deduce \cite[Theorem 5.2.7]{Ga} relating relative and rubber GW invariants of a $\bP^1$ bundle with invariants of its base. (See Theorem \ref{bundle}.) The original proof is based on (relative) virtual localization with respect to fiberwise $\bC^*$ actions.

The paper grew out of our efforts
to understand arguments and ideas  in \cite{Ga,Lai,vpb,vpf}, to which 
it is a pleasure to acknowledge our intellectural debt.

\section{Notation and Conventions}

We work over $\bC$.
 All the stacks are DM stacks of finite type over $\bC$.

\subsection{}
For a DM stack $X$, $B_*(X)$ denote the group of algebraic equivalence classes. (See \cite[Definition 2.28]{vpf}, \cite[Definition 10.3]{Fu}.)
Chow groups $A_*(X)$ and $B_*(X)$ are of rational coefficients. 

For bivariant classes, we follow the notation and conventions in \cite{Fu}.
Recall a bivariant class $\alpha \in A^{m}(F \xrightarrow{f} G)$ consists of 
maps 
\[\alpha(i)\colon A_*(V) \to A_{*-m}(W)
\]
for each cartesian diagram
\[
\xymatrix{
W\ar[r]\ar[d]                 &   V\ar[d]^i\\
F \ar[r]^f                           &    G.
}
\]
These maps are required to commute with proper pushforwards, flat pullbacks, and Gysin pullbacks.
(See \cite[Chapter 17]{Fu}.) If follows from these commutativities that $\alpha$ also induces maps 
\[
B_*(V) \to B_{*-m}(W).
\]

In particular, a class $c \in A^0(G)=A^0(\id_G)$ consists of maps $A_*(F) \to A_*(F)$ for each $F \to G$.

We will simply denote $\alpha(i)$ by $\alpha$ when
the map $i$ is clear from the context. $A^{m}(F \xrightarrow{f} G)$ is shortened to $A^{m}(f)$.

\subsection{} \label{stablemaps}
Let $X$ be a smooth projective variety over $\bC$, $L$ a line bundle over $X$,
and $\pi: Y \to X$ be the projective bundle $\bP_X(L\oplus\cO) \to X$. 

$Y$ has two sections $Y_0, Y_\infty$, where $Y_0$ is the zero section of $L$, and $Y_\infty=\bP_X(L\oplus\cO)-L$.
We can consider relative stable maps into $Y$ relative to the divisors $Y_0, Y_\infty$, or $Y_0 \coprod Y_\infty$,
and rubber stable maps into $Y$ relative to $Y_0\coprod Y_\infty$.

Our convention concerning moduli stacks of stable maps is as follows.
We will use $\mbar_{\Gamma_X}(X)$ to denote a moduli stack of stable maps to $X$ with discrete data $\Gamma_X$, here $\Gamma_X$ specifies genera, marked points, and curve classes of each connected component of the source curve, and possibly redundant information about contact orders. For relative and rubber moduli stacks of $Y$, we use
$\mbar_{\Gamma_Y}(Y^\dagger)$ and $\mbar_{\Gamma_Y}(Y^\dagger)^\sim$ respectively.
$\Gamma_Y$ not only specifies genera, marked points, and curve classes of each connected component of the source curve,
but also records contact orders of each marked point relative to the divisor(s). Here $Y^\dagger$ was used to indicate that
our target is a log
scheme, and $\sim$ for rubbers. 

We use Kim's stable log maps as models for $\mbar_{\Gamma_Y}(Y^\dagger)$ 
(\cite{Ki}) and $\mbar_{\Gamma_Y}(Y^\dagger)^\sim$(\cite{MR}). They are compatible with the 
original version of J. Li (\cite{Li}), and Graber-Vakil (\cite{GV}) by results and methods of \cite{AMW}.

Given a map $f\colon Z \to U$, and 
some discrete data $\Gamma_Z$ for $Z$, we will use $f_*\Gamma_Z$ to denote the discrete data on $U$ obtained by replacing
curve classes by their images under $f$.

\section{}
We prove Theorem \ref{FFF} in this section.

\subsection{}
Let $G$ be a proper DM stack.
We first show that a class $c$ of $A^0(G)$ is determined by its action on
$B_0(G)$.

\begin{lem}\label{GGG}

Assume  $G$ is proper.
Let $c\in A^0(G)$, then $c$ is determined by $c\colon B_0(G) \to B_0(G)$, and it is a scalar multiplication
by a rational number $n$ if and only if $c\colon B_0(G) \to B_0(G)$ is.
\end{lem}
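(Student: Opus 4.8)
The plan is to reduce both assertions to the single statement: \emph{if $d \in A^0(G)$ acts as zero on $B_0(G)$, then $d=0$ as a bivariant class.} Indeed, for the determination claim one takes $d = c_1 - c_2$ for two classes agreeing on $B_0(G)$, and for the scalar equivalence one takes $d = c - n\cdot\id$, where $\id \in A^0(G)$ is the identity bivariant class and $n\cdot\id$ its scalar multiple; in each case $d$ lies in $A^0(G)$ and acts as zero on $B_0(G)$, and the conclusion $d=0$ gives exactly what is wanted. (The reverse implication of the scalar equivalence is trivial: a scalar multiplication by $n$ on all of $A_*$ restricts to one on $B_0(G)$.) So the whole content is the vanishing statement.

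To prove it, I would unwind what $d=0$ means: $d(i)\colon A_*(F) \to A_*(F)$ is zero for every $i\colon F \to G$. By additivity it suffices to treat $\alpha = [V]$ for $V \subset F$ an integral closed substack of some dimension $k$; since the closed immersion $V \hookrightarrow F$ is proper, compatibility with proper pushforward gives $d([V]) = (V\hookrightarrow F)_* \, d([V])$, so it is enough to show $d([V]) = 0$ in $A_k(V)$ for every integral $V \to G$. Next I would reduce to the case where $V$ is proper: because $G$ is proper, one can compactify $V$ to an integral proper $\overline V$ equipped with a proper map $\overline V \to G$ extending $V \to G$, with $V \hookrightarrow \overline V$ an open immersion (take the closure of the graph of $V \to G$ inside $\overline V_0 \times G$ for any compactification $\overline V_0$ of $V$). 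Compatibility with the flat pullback along this open immersion then gives $d([V]) = d([\overline V])\big|_V$, so it suffices to prove $d([\overline V]) = 0$. Since $\overline V$ is integral of dimension $k$ we have $A_k(\overline V) = \bQ\cdot[\overline V]$, hence $d([\overline V]) = m\,[\overline V]$ for a single scalar $m \in \bQ$, and the task becomes to show $m=0$.

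For this I would push down to dimension zero. Choose an ample line bundle $L$ on $\overline V$ and cap with $c_1(L)^k$. Using that a bivariant class commutes with the Chern class operations (\cite[Chapter 17]{Fu}), one gets
\[
d\bigl(c_1(L)^k \cap [\overline V]\bigr) \;=\; c_1(L)^k \cap d([\overline V]) \;=\; m\,\bigl(c_1(L)^k \cap [\overline V]\bigr).
\]
Write $g\colon \overline V \to G$ for the (proper) structure map. Compatibility with $g_*$, together with the hypothesis that $d$ vanishes on $B_0(G)$, yields $m\cdot g_*\bigl(c_1(L)^k \cap [\overline V]\bigr) = 0$ in $B_0(G)$. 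But $c_1(L)^k \cap [\overline V]$ is a zero-cycle of degree $\deg_L \overline V > 0$, and degree is preserved under proper pushforward and descends to a homomorphism $B_0(G) \to \bQ$; hence $g_*\bigl(c_1(L)^k \cap [\overline V]\bigr)$ is nonzero in $B_0(G)$. Therefore $m=0$, completing the argument.

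The step I expect to require the most care is the commutativity of $d$ with the capping operation $c_1(L)^k \cap (-)$ on $\overline V$, since $d$ is a class over $G$ while $c_1(L)$ is a Chern-class operation over $\overline V$; this is exactly where the bivariant formalism of \cite[Chapter 17]{Fu} is used, and I would make sure to invoke the correct commutativity of bivariant classes with Chern classes. The only other points needing verification are routine in the stacky setting over $\bQ$: existence of the integral proper compactification $\overline V \to G$, the identification $A_k(\overline V) = \bQ\cdot[\overline V]$, and the fact that the degree of a zero-cycle is well defined on $B_0(G)$ for $G$ proper.
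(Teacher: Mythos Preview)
Your overall strategy is correct and does prove the lemma, but one step is not as routine as you suggest: you write ``choose an ample line bundle $L$ on $\overline{V}$'', yet a proper DM stack (indeed already a proper scheme) need not be projective, so an ample $L$ need not exist. The gap is repairable --- for instance, apply Chow's lemma to produce a projective scheme $W$ with a proper generically finite map $p\colon W \to \overline{V}$; since $p_*[W]$ is a nonzero multiple of $[\overline{V}]$ and $d$ commutes with $p_*$, the scalar for $[W]$ coincides with $m$, and your Chern-class argument then runs on $W$ --- but this deserves to be spelled out rather than folded into ``routine''. (The compactification step you flag is fine: Nagata for separated DM stacks of finite type over $\bC$ is available, and the closure of the graph is integral.)

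The paper's route is different and sidesteps both the compactification and the projectivity issue. For an integral $W\to G$ with $c[W]=n(W)[W]$, it picks a closed point $j\colon P\hookrightarrow W$ (in the smooth locus), so that $j$ is \'etale locally a regular embedding and admits a Gysin map $j^!$; commutativity of $c$ with $j^!$ and $j^![W]=[P]$ give $n(W)=n(P)$. Since $P$ is a point, $P\to G$ is automatically proper, and one simply pushes $c[P]=n(P)[P]$ into $B_0(G)$, using that $B_0(P)\to B_0(G)$ is injective via the degree map on the proper stack $G$. Both arguments ultimately manufacture a nonzero zero-cycle tied to $[W]$ by bivariant operations and push it to $B_0(G)$; the paper's Gysin-to-a-point manoeuvre is shorter and avoids the detours through Nagata compactification and Chow's lemma that your approach requires.
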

\begin{proof}

It is easy to see
$c$ is determined by maps $c\colon A_{\dim W}(W) \to A_{\dim W}(W)$ for integral (irreducible and reduced)
$W$ over $G$. In fact, for any $F \to G$, the map $c\colon A_*(F) \to A_*(F)$ is determined by its action on integral substacks of $F$. Let  $i\colon V \to F$ be a closed embedding of an integral DM stack $V$ with fundamental class $[V]$ into $F$. As $A_{\dim V}(V)=\bQ[V]$,
\[ 
c\colon A_{\dim V}(V) \to A_{\dim V}(V)
\]
is defined by 
\[
c([V])=n(V)[V]. 
\] for some rational number $n(V)$.
Then we see that $c(i_*[V])=n(V)(i_*[V])$ as $c$ commutes with the pushforward $i_*$.
Therefore $c\colon A_*(F) \to A_*(F)$ is determined by those $n(V)$ where $V$ runs over integral substacks of $F$.

To determine $n(W)$ for an integral $W$, pick a closed point $j\colon P \to W$,
so that $j$ is  \'etale locally a regular embedding.
As $c$ commutes with the Gysin pullback $j^!$, we see that $n(W)=n(P)$.
As the relation $c[P]=n(P)[P]$ holds in $B_0(P)$, and $P \to G$ is proper, we can pushforward
$c[P]=n(P)[P]$ to $B_0(G)$. Since $B_0(P) \to B_0(G)$ is injective by Lemma \ref{injectivity}, 
$n(P)$ is determined by  $c\colon B_0(G) \to B_0(G)$. From here it is straightforward to complete the proof.

\begin{lem} \label{injectivity}
Let $i\colon P=\spec \bC \to H $ be a closed point of a proper DM stack $H$, then
both $i_*\colon A_0(P) \to A_0(H)$ and $i_*\colon B_0(P) \to B_0(G)$ are injective.
\end{lem}
\begin{proof}

As $P$ and $H$ are proper, we have a commutative diagram
\[
\xymatrix{
A_0(P) \ar[r]^{i_*}\ar[rd]_{\int_P} & A_0(H)\ar[d]^{\int_H}\\
                             & A_0(\spec \bC)=\bQ.
}\]
Since $\int_P[P]=1$ is nonzero, $i_*$ is injective on $A_0$.
The same argument works for $B_0$.
\end{proof}

\end{proof}

\subsection{}
Now we can prove Theorem \ref{FFF}.
\begin{proof}

For any closed point $l\colon P \to G$ we have a commutative diagram
\[
\xymatrix{
A_0(P) \ar[d]^{l_*} & A_0(P)\ar[d]^{l_*} \ar[l]_c\\
A_0(G)         &   A_0(G) \ar[l]_c.
}
\]

Composing it with
\[
\xymatrix{
A_0(G) \ar[d]^{h_*} & A_0(G)\ar[l]_c\ar[d]^{h_*}\\
A_0(G')          &   A_0(G')\ar[l]_{c'},
}
\]
we have
\[
\xymatrix{
A_0(P) \ar[d]^{l_*} & A_0(P)\ar[d]^{l_*}\ar[l]_c\\
A_0(G) \ar[d]^{h_*} & A_0(G)\ar[d]^{h_*}\\
A_0(G')          &   A_0(G')\ar[l]_-{c'}.
}
\]
If we view $P$ as a point of $G'$ via $h\circ l$, then 
this diagram indicates that $c=c'\colon A_0(P) \to A_0(P).$
Here we used $h_*\circ l_*$ being injective.

\end{proof}


\section{}
We define a compatibility condition between bivariant classes that is weaker than
being pullbacks and prove a lemma that will be used in our proof of Gathmann's theorem.

\begin{defn}
Consider a commutative diagram
\begin{equation} \label{D}
\xymatrix{
F \ar[r]^\mu\ar[d]^f  & F' \ar[d]^{f'}\\
G \ar[r]^\nu          & G',
}
\end{equation}
 between proper DM stacks. Note that $f,f',\mu,\nu$ are all proper maps.

Two bivariant classes $\alpha \in A^{-d}(f)$ and $\alpha' \in A^{-d}(f')$ of the same degree
are compatible (with respect to \eqref{D}) if
\[
\mu_*\circ \alpha=\alpha'\circ \nu_*
\colon A_*(G) \to A_{*+d}(F').
\] \
\end{defn}

The following lemma follows from our definition of compatibility and we omit the proofs.

\begin{lem}  \label{key}
\begin{enumerate}
 \item $f_*\alpha$ and $f'_*\alpha'$ are compatible with respect to
 \[\xymatrix{
 G \ar[r]^\nu\ar[d]^{\on{id}_G}  & G' \ar[d]^{\on{id}_{G'}}\\
 G \ar[r]^\nu          & G',
 }\]
 
 \item Consider a commutative diagram
 \[
 \xymatrix{
F \ar[r]^\mu \ar[d]^f        & F' \ar[d]^{f'}\\
G \ar[r]^\nu \ar[d]^g        & G' \ar[d]^{g'}\\
H  \ar[r]^\eta                  & H'
}
 \]
 If $\alpha \in A^*(f)$ and $\alpha' \in A^*(f')$ are compatible with respect to the upper square, and 
 $\beta \in A^*(g)$ and $\beta' \in A^*(g')$ are compatible with respect to the lower square, 
 then $\alpha\cdot \beta$ and $\alpha'\cdot \beta'$ are compatible with respect to the whole rectangle.

\item \label{hehe}

Let  $g\colon G \to G'$ and $h\colon H \to H'$ be two proper maps, $y$ and $y'$ cycles on $H$ and $H'$ respectively.
Consider the commutative diagram
\[
\xymatrix{
G \times H \ar[r]^{g\times h}\ar[d] &G' \times H'\ar[d]\\
G \ar[r]^g                      & G'.
}
\] where vertical arrows are projections.
Let $\alpha$ and 
$\alpha'$ be given by exterior products $ (-) \times y$ and $(-)\times y'$ respectively, then
$\alpha$ and $\alpha'$ are compatible if $h_*(y)=y'$.

\end{enumerate}
\end{lem}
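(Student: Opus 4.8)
The three statements are all immediate diagram chases, so the plan is simply to record, for each part, which functoriality of bivariant operations makes the two composites agree. The one structural point worth isolating first is that compatibility with respect to a square is, by definition, an equality of maps out of the Chow group of the lower-left corner into that of the upper-right corner; consequently for every part it is enough to evaluate the two composites on a single arbitrary class and compare, and one never has to range over auxiliary fibered test objects.

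For part (1) I would unwind the bivariant pushforward, $(f_*\alpha)(x) = f_*(\alpha(x))$ and $(f'_*\alpha')(x') = f'_*(\alpha'(x'))$ (\cite[Chapter 17]{Fu}). Evaluating the desired identity on $x \in A_*(G)$ turns it into $\nu_* f_* \alpha(x) = f'_* \alpha'(\nu_* x)$. The square \eqref{D} gives $\nu_* f_* = f'_* \mu_*$ by functoriality of proper pushforward, and the compatibility hypothesis gives $\mu_* \alpha = \alpha' \nu_*$; substituting the second into the first produces exactly the right-hand side.

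Part (2) is the same maneuver performed twice. Writing the bivariant product as $(\alpha\cdot\beta)(z) = \alpha(\beta(z))$, the rectangle-compatibility to be shown becomes $\mu_*\alpha(\beta(z)) = \alpha'(\beta'(\eta_* z))$ for $z \in A_*(H)$. I would first use the lower-square compatibility of $\beta,\beta'$ to rewrite $\beta'(\eta_* z)$ as $\nu_*\beta(z)$, and then feed the class $\beta(z) \in A_*(G)$ into the upper-square compatibility of $\alpha,\alpha'$ to rewrite $\alpha'(\nu_*\beta(z))$ as $\mu_*\alpha(\beta(z))$; this matches the left-hand side. Part (3) is handled by exterior-product functoriality: with $\alpha = (-)\times y$ and $\alpha' = (-)\times y'$ (of equal bivariant degree because $h_*$ preserves dimension), evaluating on $x \in A_*(G)$ gives $(g\times h)_*(x\times y)$ on one side and $g_*(x)\times y'$ on the other, and the identity $(g\times h)_*(x\times y) = g_*(x)\times h_*(y)$ reduces the claim to the hypothesis $h_*(y)=y'$.

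No step is a genuine obstacle; the only care required is bookkeeping, namely matching the abstract roles of $\mu$ and $\nu$ in the definition of compatibility to the concrete arrows in each diagram and invoking the correct description of $f_*\alpha$, of the bivariant product, and of the exterior-product class. This routineness is presumably why the author omits the proofs.
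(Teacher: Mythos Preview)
Your proof is correct and is precisely the routine verification the paper has in mind when it writes ``follows from our definition of compatibility and we omit the proofs.'' Each part is the expected one-line diagram chase using, respectively, functoriality of proper pushforward, associativity of the bivariant product, and the exterior-product identity $(g\times h)_*(x\times y)=g_*x\times h_*y$; there is nothing to add or correct.
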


\section{}
To state Gathmann's theorem, we need some preparation.

Let $X$ be a smooth projective variety over $\bC$, $L$ a line bundle over $X$,
and $\pi \colon Y \to X$ be the projective bundle $\bP_X(L\oplus\cO) \to X$.
See \ref{stablemaps} for our conventions on moduli stacks.

Consider a moduli stack $\mbar_{\Gamma_Y}(Y^\dagger)$ or $\mbar_{\Gamma_Y}(Y^\dagger)^\sim$.
Let $\Gamma_X=\pi_*\Gamma_Y$, 
as long as $\mbar_{\Gamma_X}(X)$ exists,
the map $\pi$ induces a map 
\[
\mbar_{\Gamma_Y}(Y^\dagger) \to \mbar_{\Gamma_X}(X)
\] or 
\[
\mbar_{\Gamma_Y}(Y^\dagger)^\sim \to \mbar_{\Gamma_X}(X).\]

Let $\Gamma_X'$ be obtained from $\Gamma_X$ by forgetting some marked points,
and $\Gamma_X''$ obtained from $\Gamma_X'$ by forgetting some components.
If $\mbar_{\Gamma_X''}(X)$ exists, then we have a forgetful map
\[
\mbar_{\Gamma_X}(X) \to \mbar_{\Gamma_X'}(X)
\] forgetting marked points not in $\Gamma_X'$,
and a projection map
\[
\mbar_{\Gamma_X'}(X) \simeq \mbar_{\Gamma_X''}(X) \times \mbar_{\Gamma_X'- \Gamma_X''}(X) \to \mbar_{\Gamma_X''}(X)\
\]
forgetting components not in $\Gamma_X''$. 
Here $\Gamma_X'- \Gamma_X''$ denotes the data in $\Gamma_X'$ about the components
not in $\Gamma_X''$.

Denote by $p_X$ the composition of 
\[
\mbar_{\Gamma_Y}(Y^\dagger) \to \mbar_{\Gamma_X}(X) \to \mbar_{\Gamma_X'}(X) \to \mbar_{\Gamma_X''}(X)
\]
and $q_X$ the composition of 
\[
\mbar_{\Gamma_Y}(Y^\dagger)^\sim \to \mbar_{\Gamma_X}(X) \to \mbar_{\Gamma_X'}(X) \to \mbar_{\Gamma_X''}(X).
\]

The virtual relative dimension of $p_X$ or $q_X$ is
\[\deg[\mbar_{\Gamma_Y}(Y^\dagger)]^\vir -\deg[\mbar_{\Gamma_X''}(X)]^\vir\]
or \[\deg[\mbar_{\Gamma_Y}(Y^\dagger)^\sim]^\vir -\deg[\mbar_{\Gamma_X''}(X)]^\vir.\]

\begin{thm}[{\cite[Theorem 5.2.7]{Ga}}]\label{bundle}
The map $p_X$ or $q_X$ satisfies the virtual pushforward property if it has
nonnegative virtual relative dimension.
\end{thm}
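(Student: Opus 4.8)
The plan is to reduce the virtual pushforward property for $p_X$ (or $q_X$) to the base case where the target moduli stack has $B_0 = \bQ$, and then invoke Theorem \ref{FFF} to upgrade scalar-multiplication on a simpler space to scalar-multiplication on $\mbar_{\Gamma_X''}(X)$. The central object is a virtual pullback $p_X^!$ realized as a bivariant class in $A^{-d}(p_X)$, where $d$ is the virtual relative dimension; virtual smoothness of $p_X$ (as in \cite{Lai,vpb,vpf}) supplies such a class with $p_X^!([\mbar_{\Gamma_X''}(X)]^\vir)=[\mbar_{\Gamma_Y}(Y^\dagger)]^\vir$. For any class $\gamma$ built from evaluation and cotangent classes, one then studies the bivariant class $(p_X)_*(\gamma \cdot p_X^!) \in A^{\le 0}(\mbar_{\Gamma_X''}(X))$, and the whole problem becomes showing this class is a scalar multiplication (its $A^{<0}$ part vanishing automatically since $A^{<0}$ of a proper DM stack is zero).

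First I would set up the comparison diagram. The factorization $\mbar_{\Gamma_Y}(Y^\dagger) \to \mbar_{\Gamma_X}(X) \to \mbar_{\Gamma_X'}(X) \cong \mbar_{\Gamma_X''}(X) \times \mbar_{\Gamma_X' - \Gamma_X''}(X) \to \mbar_{\Gamma_X''}(X)$ exhibits the target as a product projection in its last step. The idea is to deform or compare $\mbar_{\Gamma_Y}(Y^\dagger)$ over $X$ to the ``constant'' situation over a point: using that $\pi\colon Y \to X$ is a $\bP^1$-bundle, I would build a commutative square as in Definition \ref{D} relating the genuine moduli stack $G = \mbar_{\Gamma_X''}(X)$ to a reference stack $G'$ whose base is a point (so that $B_0(G') = \bQ$), with compatible virtual pullback classes $\alpha, \alpha'$. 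Here Lemma \ref{key} does the bookkeeping: part (2) lets me compose compatibilities along the factorization, part (\ref{hehe}) handles the exterior-product structure coming from the product decomposition $\mbar_{\Gamma_X'}(X) \cong \mbar_{\Gamma_X''}(X) \times \mbar_{\Gamma_X'-\Gamma_X''}(X)$, and part (1) propagates compatibility through the pushforward $(p_X)_*$.

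The logical payoff comes from Theorem \ref{FFF}. Once I have produced a map $h\colon G \to G'$ (or the relevant comparison) together with a bivariant class $c = (p_X)_*(\gamma \cdot p_X^!)$ on $G$ and a companion $c'$ on $G'$ making the square of Theorem \ref{FFF} commute, the compatibility established via Lemma \ref{key} is exactly the statement that $h_* \circ c = c' \circ h_*$ on $A_0$. Since $G'$ has $B_0(G') = \bQ$, every $\bQ$-linear endomorphism of $B_0(G')$ is a scalar, so $c'$ is a scalar multiplication; Theorem \ref{FFF} then forces $c$ to be a scalar multiplication as well, which is precisely the virtual pushforward property for $p_X$ in top degree $d$, with the lower-degree vanishing following from $A^{<0}(G)=0$. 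The argument for $q_X$ with rubber targets is formally identical once the rubber virtual pullback is in place.

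The hard part will be constructing the comparison square \eqref{D} with genuinely compatible virtual pullback classes, i.e., verifying that $\mu_* \circ p_X^! = (p_X')^! \circ \nu_*$ relating the $Y$-side virtual pullback to the reference one. This requires a functoriality statement for virtual pullbacks along the $\bP^1$-bundle structure — concretely, that the obstruction theory of relative/rubber stable maps to $Y^\dagger$ is pulled back compatibly under the base-change maps in the diagram, so that the associated bivariant classes commute past $\nu_*$. The existence and compatibility of such virtual pullbacks for log/rubber stable maps is the technical heart; I expect to appeal to the virtual pullback formalism of \cite{vpb} together with the comparison results of \cite{AMW} cited in \ref{stablemaps}, and the main labor lies in checking these obstruction-theoretic compatibilities rather than in any Chow-group manipulation, which Lemma \ref{key} and Theorem \ref{FFF} render routine.
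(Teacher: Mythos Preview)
Your overall architecture is right: build compatible bivariant classes $c=(p_X)_*(\gamma\cdot p_X^!)$ and $c'$ via Lemma~\ref{key}, then invoke Theorem~\ref{FFF} once you know $B_0(G')=\bQ$. But the construction of the comparison stack $G'$ and the map $h\colon G\to G'$ is where your proposal goes wrong, and this is the one genuinely nontrivial idea in the proof.

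You propose to ``deform or compare \dots\ to the `constant' situation over a point,'' i.e.\ to take $G'$ to be something whose base is $\spec\bC$. There is no way to make this work: a map $X\to\spec\bC$ does not induce a useful map of moduli stacks, $L$ does not descend, and there is no $\bP^1$-bundle over a point receiving $Y$ compatibly with the obstruction theories. The paper's construction goes in the opposite direction: one \emph{enlarges} the base by choosing a closed embedding $i\colon X\hookrightarrow P$ into a \emph{homogeneous} projective variety such that $L\cong i^*N$ for some line bundle $N$ on $P$, sets $Q=\bP_P(N\oplus\cO)$, and takes $G'=\mbar_{\Gamma_P''}(P)$ with $h$ induced by $i$. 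The point is the result of Kim--Pandharipande \cite{KP}: for $P$ homogeneous, the moduli space of stable maps to $P$ is rationally connected, so $B_0(\mbar_{\Gamma_P''}(P))=\bQ$. With this choice the square \eqref{sqr} is cartesian, so $j^*T^{\on{log}}_{Q^\dagger/P}=T^{\on{log}}_{Y^\dagger/X}$ and the virtual pullbacks for $\mbar_{\Gamma_Y}(Y^\dagger)\to\mbar_{\Gamma_X}(X)$ and $\mbar_{\Gamma_Q}(Q^\dagger)\to\mbar_{\Gamma_P}(P)$ are literally pulled back from one another; the remaining pieces (flat pullback along forgetting marks, exterior product along forgetting components) are then handled exactly as you outline via Lemma~\ref{key}. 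Once you replace ``over a point'' by ``embed into a homogeneous space and cite \cite{KP},'' the rest of your sketch matches the paper.
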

\begin{proof}
We prove for $p_X$ first.

There exists a closed embedding $i\colon X \to P$ into a homogeneous variety $P$ such that $L$ can be realized as a pullback $i^*N$
for some line bundle $N$ over $P$. Let $Q=\bP_P(N\oplus\cO)$, then
there is a cartesian diagram
\begin{equation}\label{sqr}
\xymatrix{
Y \ar[r]^j\ar[d] & Q\ar[d]\\
X\ar[r]^i          &  P.
}
\end{equation}
Let $\Gamma_Q=j_*\Gamma_Y$, 
      $\Gamma_P=i_*\Gamma_X$,
      $\Gamma_P'=i_*\Gamma_X'$,
      $\Gamma_P''=i_*\Gamma_X''$.
It is easy to check we have cartesian diagrams
\begin{equation} \label{pullback}
\xymatrix{
\mbar_{\Gamma_Y}(Y^\dagger) \ar[r]\ar[d]  & \mbar_{\Gamma_Q}(Q^\dagger)\ar[d]\\
\mbar_{\Gamma_X}(X)               \ar[r]          & \mbar_{\Gamma_P}(P),
}
\end{equation}

\begin{equation}\label{flat}
\xymatrix{
\mbar_{\Gamma_X}(X)               \ar[r]  \ar[d]        & \mbar_{\Gamma_P}(P)\ar[d]\\
\mbar_{\Gamma_X'}(X)               \ar[r]          & \mbar_{\Gamma_P'}(P).
}
\end{equation} \label{prod}
and a commutative diagram
\begin{equation}
\xymatrix{
\mbar_{\Gamma_X'}(X)               \ar[r]  \ar[d]        & \mbar_{\Gamma_P'}(P)\ar[d]\\
\mbar_{\Gamma_X''}(X)               \ar[r]          & \mbar_{\Gamma_P''}(P).
}
\end{equation} 
They can be composed into a commutative diagram
\begin{equation} \label{XXX}
\xymatrix{
\mbar_{\Gamma_Y}(Y^\dagger) \ar[r]\ar[d]^{p_X}  & \mbar_{\Gamma_Q}(Q^\dagger)\ar[d]^{p_P}\\
\mbar_{\Gamma_X''}(X)               \ar[r]          & \mbar_{\Gamma_P''}(P).
}
\end{equation}

We will apply Theorem \ref{FFF} to
\[
\mbar_{\Gamma_X''}(X) \to \mbar_{\Gamma_P''}(P).
\] 
As $P$ is homogenous, $B_0(\mbar_{\Gamma_P''}(P))=\bQ$ by \cite{KP}. 
Using the diagrams above we will construct compatible classes $c$ and $c'$ .

For \eqref{pullback}, there exists a virtual pullback for vertical arrows, it comes from the
log cotangent bundle $T^{\on{log}}_{Q^\dagger/P}$.(cf. \cite[Proposition 4.9]{vpf})
Since $j^*T^{\on{log}}_{Q^\dagger/P}=T^{\on{log}}_{Y^\dagger/X}$ in \eqref{sqr},
this virtual pullback  maps $[\mbar_{\Gamma_X}(X)]^\vir$ to $[\mbar_{\Gamma_Y}(Y^\dagger)]^\vir$.

For \eqref{flat}, we have the flat pullback as a bivariant class for each vertical arrow.
This class maps $[\mbar_{\Gamma_X'}(X)]^\vir$ to $[\mbar_{\Gamma_X}(X)]^\vir$ by
\cite[Axiom IV. forgetting tails]{Be}.

For \eqref{prod}, we use Lemma \ref{key} to construct compatible bivariant classes.
The class for
\[
\mbar_{\Gamma_X'}(X) \simeq \mbar_{\Gamma_X''}(X) \times \mbar_{\Gamma_X'- \Gamma_X''}(X)  \to \mbar_{\Gamma_X''}(X)
\] 
is
$(-) \times [\mbar_{\Gamma'_X-\Gamma_X''}(X)]^{\vir}$, and  it maps
$[\mbar_{\Gamma_X''}(X)]^\vir$ to $[\mbar_{\Gamma_X'}(X)]^\vir$. Here we need
\cite[Axiom II. products]{Be}.

Composing these classes, we get compatible bivariant classes $p_X^! \in A^*(p_X)$ and
$p_P^!\in A^*(p_P)$ by Lemma \ref{key}, and $p^!_X[\mbar_{\Gamma_X''}(X)]^\vir=[\mbar_{\Gamma_Y}(Y^\dagger)]^\vir$.

Let $\gamma_Y \in A^*(\mbar_{\Gamma_Y}(Y^\dagger))$ be a class
whose degree is
$\deg[\mbar_{\Gamma_Y}(Y^\dagger)]^\vir -\deg[\mbar_{\Gamma_X''}(X)]^\vir$,
and it is  made up of
evaluation classes of the form $[Y_0]$ or $[Y_\infty]$, and cotangent line classes.
Note that there is a canonical
$\gamma_Q \in A^*(\mbar_{\Gamma_Q}(Q^\dagger))$ whose pullback to $\mbar_{\Gamma_Y}(Y^\dagger)$
is $\gamma_Y$.

Now we can apply Theorem \ref{FFF} to $(p_X)_*(\gamma_Y \cdot p_X^!)$ and $(p_P)_*(\gamma_Q \cdot p_P^!)$
and conclude the proof for $p_X$. Note that these two classes are compatible by Lemma \ref{key}.

The case for $q_X$ is entirely similar, the only difference is that we will need perfect obstruction theories for rubber moduli stacks
discussed in \cite{MR}.
\end{proof}

\begin{remark}
In case $\Gamma_X''=\Gamma_X'$, we can simply apply \cite[Corollary 3.15]{vpf} since
we have cartesian diagrams, and $p_X^!$ is the pullback of $p_P^!$. 
Also the scalar multiplication for $(X, L)$ can be determined using $(\bP^1, \cO(\deg L))$.

The case when $\Gamma_X''$ differs from $\Gamma_X'$ is only useful for $q_X$.
Relative invariants satisfy the product rule, so we only need to consider connected relative stable maps,
in which case there is no component to forget.
\end{remark}

\begin{remark}
Virtual pushforward property for $p_X$ and $q_X$ can be used to relate invariants of $Y$ to
invariants of $X$, more discussion can be found in \cite[Chapter 5]{Ga}.

Independant of virtual pushforward properties, Maulik and Pandharipande in \cite{MP} showed that all 
(desendant) invariants of $Y$ can be effectively reconstructed from invariants of X. 
For the genus zero case, 
combining virtual pushforward properties and techniques in \cite{MP},
(ancestor) invariants of $(Y, Y_0\coprod Y_\infty)$ can be determined by $X$ in a 
simple manner. 
(See \cite[Section 3]{LLQW}.)

\end{remark}

\end{document}